\documentclass[12pt]{amsproc}

\usepackage{fullpage}
\usepackage[utf8]{inputenc}
\usepackage[T1]{fontenc}
\usepackage{graphicx,subfigure}
\usepackage{amsmath,amsfonts,amssymb,mathtools}
\usepackage{stmaryrd}
\usepackage{mathrsfs,dsfont}
\usepackage{amsthm}
\usepackage{mathabx}
\usepackage{tabularx}
\usepackage{float}

\usepackage{hyperref}

\usepackage{enumerate}

\usepackage{color}

\newcommand{\E}{\mathbb{E}}
\newcommand{\R}{\mathbb{R}}


\newcommand{\T}{\mathbb{T}}

\newtheorem{theo}{Theorem}[section]

\newtheorem{ass}{Assumption}

\usepackage{algorithm}
\usepackage{algorithmic}

\begin{document}

\title{The averaging principle for stochastic differential equations driven by a Wiener process revisited}

\author{Charles-Edouard Br\'ehier}
\address{Univ Lyon, Université Claude Bernard Lyon 1, CNRS UMR 5208, Institut Camille Jordan, 43 blvd. du 11 novembre 1918, F-69622 Villeurbanne cedex, France}
\email{brehier@math.univ-lyon1.fr}

\date{}

\keywords{averaging principle; stochastic differential equations; Poisson equation}
\subjclass{60H10}

\begin{abstract}
We consider a one-dimensional stochastic differential equation driven by a Wiener process, where the diffusion coefficient depends on an ergodic fast process. The averaging principle is satisfied: it is well-known that the slow component converges in distribution to the solution of an averaged equation, with generator determined by averaging the square of the diffusion coefficient.

We propose a version of the averaging principle, where the solution is interpreted as the sum of two terms: one depending on the average of the diffusion coefficient, the other giving fluctuations around that average. Both the average and fluctuation terms contribute to the limit, which illustrates why it is required to average the square of the diffusion coefficient to find the limit behavior.
\end{abstract}

\maketitle

\section{Introduction}\label{sec:intro}

Multiscale and stochastic systems are ubiquitous in all fields of science and engineering. Averaging and homogenization techniques~\cite{PavliotisStuart} are popular methods to derive lower dimensional problems, which are easier to understand and simulate. In this article, we focus on the averaging principle for the following class of stochastic differential equations (SDEs)
\begin{equation}\label{eq:SDEintro}
dX^\epsilon(t)=\sigma\bigl(X^\epsilon(t),m(t/\epsilon)\bigr)d\beta(t),
\end{equation}
where $\epsilon\ll 1$ is the time scale separation parameter, $\beta$ is a standard real-valued Brownian motion, and the diffusion coefficient $\sigma$ is a smooth function. See Section~\ref{sec:ass} for precise assumptions. The fast component of the system is given by an ergodic Markov process $\bigl(m(t)\bigr)_{t\ge 0}$, evolving at the time scale $t/\epsilon$. The averaging principle states that one can eliminate the fast process when $\epsilon\to 0$, precisely the slow component $X^\epsilon$ converges (in distribution) to the solution $\overline{X}$ of an autonomous evolution equation called the averaged equation. In the case of the system~\eqref{eq:SDEintro}, the averaged equation is a SDE of the type
\begin{equation}\label{eq:averagedintro}
d\overline{X}(t)=\Sigma(\overline{X}(t))d\beta(t),
\end{equation}
where
\[
\Sigma^2(\cdot)=\overline{\sigma^2}(\cdot)=\int \sigma(\cdot,m)^2d\mu(m), 
\]
and $\mu$ denotes the invariant probability distribution of the fast ergodic process $\bigl(m(t)\bigr)_{t\ge 0}$.

In this article, we revisit this problem, and propose an original point of view which explains why the limit equation is not given by simply averaging the diffusion coefficient $\sigma$, which would give
\[
dX(t)=\overline{\sigma}(X(t))d\beta(t).
\]
Note that one has $\Sigma^2=\overline{\sigma^2}\ge \overline{\sigma}^2$, thus the averaging principle may be interpreted as exhibiting enhanced diffusion. The approach used in this article can be explained as follows: we introduce a decomposition $X^\epsilon=Y^\epsilon+Z^\epsilon$ of the slow component, where
\begin{equation}\label{eq:systemintro}
\left\lbrace
\begin{aligned}
dY^\epsilon(t)&=\overline{\sigma}(Y^\epsilon(t)+Z^\epsilon(t))d\beta(t)\\
dZ^\epsilon(t)&=\bigl(\sigma(Y^\epsilon(t)+Z^\epsilon(t),m^\epsilon(t))-\overline{\sigma}(Y^\epsilon(t)+Z^\epsilon(t))\bigr)d\beta(t).
\end{aligned}
\right.
\end{equation}
Observe that $Y^\epsilon$ is defined in terms of $\overline{\sigma}$, and thus may be interpreted as an average term, whereas $Z^\epsilon$ may be interpreted as a fluctuation term. The reason behind the expression of the averaged equation~\eqref{eq:averagedintro} in terms of $\overline{\sigma^2}$ is the fact that $Z^\epsilon$ converges to a non-trivial limit when $\epsilon\to 0$. Precisely, the main result of this article, Theorem~\ref{th:main}, states that $(Y^\epsilon(T),Z^\epsilon(T))$ converges in distribution, when $\epsilon\to 0$, to $(Y(T),Z(T))$, for all $T\ge 0$, given by
\begin{equation}\label{eq:system_avintro}
\left\lbrace
\begin{aligned}
dY(t)&=\overline{\sigma}(Y(t)+Z(t))d\beta_t^1\\
dZ(t)&=\langle \sigma\rangle (Y(t)+Z(t))d\beta_t^2,
\end{aligned}
\right.
\end{equation}
where $\bigl(\beta_t^1\bigr)_{t\ge 0}$ and $\bigl(\beta_t^2\bigr)_{t\ge 0}$ are two independent standard real-valued Wiener processes, and $\langle\sigma\rangle^2=\overline{(\sigma-\overline{\sigma})^2}$. It is then straightforward to retrieve the standard version of the averaging principle: $X^\epsilon(T)=Y^\epsilon(T)+Z^\epsilon(T)\to Y(T)+Z(T)$, and one checks that $Y(T)+Z(T)$ is equal to $\overline{X}(T)$ in distribution. That identity is due to the following observation: one has $\overline{\sigma}^2+\langle \sigma\rangle^2=\overline{\sigma^2}$. The decomposition into average and fluctuation terms then clearly explains the diffusion enhancement in the averaged equation~\eqref{eq:averagedintro}.

The main result of this article has an elementary formulation. Even if the averaging principle has been extensively studied by many authors, to the best of our knowledge, it seems that the point of view proposed in this article is original and that Theorem~\ref{th:main} is a new result in the mathematical literature. The analysis is performed for a simple one-dimensional SDE, it may be generalized to more complicated problems.

Let us review the literature concerning the averaging principle for SDEs. The list of references is not exhaustive. We refer to the seminal article~\cite{Hasminskii} by Hasminkskii and to the standard monograph~\cite{FreidlinWentzell} (in particular Chapter~7). See also~\cite{PavliotisStuart} (in particular Chapter~17) for a recent overview of the averaging and homogenization techniques for SDEs. Let us also mention~\cite{Veretennikov}, and the recent works~\cite{RocknerSunXie:19,RocknerXie:20}. In the last decade, the averaging principle has been extensively studied for systems of stochastic partial differential equations, see for instance~\cite{Cerrai,CerraiFreidlin}, contributions of the author~\cite{B1,B2} and references therein. Recently Hairer and Li~\cite{HairerLi} have extended the averaging principle for SDE systems of the type~\eqref{eq:SDEintro} where the standard Brownian motion $\beta$ is replaced by a fractional Brownian motion $\beta^H$ with Hurst index $H>1/2$: in Section~\ref{sec:disc} below we explain how the point of view developped in the present article is related to that generalization. Finally, numerical methods for systems of the type~\eqref{eq:SDEintro} which are efficient when $\epsilon\ll 1$ have been studied: see for instance the heterogeneous multiscale method proposed in~\cite{ELiuVandenEijnden:05} and the asymptotic preserving schemes proposed in~\cite{BRR}.

The proof of the main result Theorem~\ref{th:main} employs two standard tools when studying the behavior of multiscale stochastic systems: solutions of Kolmogorov and Poisson equations. We refer for instance to~\cite{KhasminskiiYin:05} and to the series of articles~\cite{PardouxVeretennikov1,PardouxVeretennikov2,
PardouxVeretennikov3} for similar computations. See also~\cite{B1,B2} where weak error estimates in the averaging principle for SPDEs are proved using such techniques, and~\cite{RocknerSunXie:19,RocknerXie:20}. An original feature of the proof of Theorem~\ref{th:main} below is to consider the solutions of two Poisson equations (one related to the average behavior, one related to the fluctuations), whereas the standard approach to the averaging principle only requires a single Poisson equation. This may be surprising since the system~\eqref{eq:SDEintro} only depends on two time scales $t$ and $t/\epsilon$. The use of two Poisson equations is standard in homogenization or diffusion approximation problems, where three time scales $t$, $t/\epsilon$ and $t/\epsilon^2$ appear, see for instance~\cite{CotiZelatiPavliotis,HairerPavliotis} and \cite{PardouxVeretennikov1,PardouxVeretennikov2,
PardouxVeretennikov3}. The list of references is not exhaustive.

This article is organized as follows. Section~\ref{sec:setting} is devoted to state the assumptions (Section~\ref{sec:ass}) and the main result (Section~\ref{sec:main}) of this article. The proof of Theorem~\ref{th:main} is provided in Section~\ref{sec:proof}. Concluding remarks and perspectives for future works are given in Section~\ref{sec:disc}.

\section{Setting and main result}\label{sec:setting}

Let $\bigl(\beta(t)\bigr)_{t\ge 0}$ be a standard real-valued Wiener process. Let $\epsilon\in(0,1)$ denote the time-scale separation parameter. We consider the following SDE on the one-dimensional torus $\T$
\begin{equation}\label{eq:SDE}
dX^\epsilon(t)=\sigma\bigl(X^\epsilon(t),m^\epsilon(t)\bigr)d\beta(t),
\end{equation}
with initial condition $X_0^\epsilon=x_0\in\R$ (assumed to be deterministic and independent of $\epsilon$ for simplicity). Assumptions for the diffusion coefficient $\sigma$ and the fast process $m^\epsilon$ are given in Section~\ref{sec:ass} below.

Working in the one-dimensional torus $\T$ simplifies the presentation, however one may replace $\T$ by $\R$ with minor modifications in the setting. Generalization to higher dimensional problems is mentioned in Section~\ref{sec:disc}.

\subsection{Assumptions}\label{sec:ass}

The diffusion coefficient $\sigma$ is assumed to satisfy the following conditions.
\begin{ass}\label{ass:sigma}
The mapping $\sigma:\T\times\R\to\R$ is of class $\mathcal{C}^4$, with bounded derivatives with respect to the second variable $m$. In addition, assume that for all $x\in\T$, the mapping $\sigma(x,\cdot)$ is not constant.
\end{ass}
In particular, note that $\sigma$ is Lipschitz continuous, this ensures the global well-posedness of~\eqref{eq:SDE} for all $\epsilon>0$.

The fast process $m^\epsilon$ is assumed to satisfy the following conditions:
\begin{ass}\label{ass:m}
For all $\epsilon\in(0,1)$ and all $t\ge 0$, one has $m^\epsilon(t)=m(t/\epsilon)$, where $\bigl(m(t)\bigr)_{t\ge 0}$ is a real-valued ergodic Markov process which is independent of $\beta$. We assume that the initial condition $m(0)=m_0$ is a given deterministic real number. Assume that $\underset{t\ge 0}\sup~\E[|m(t)|^2]<\infty$.

Let $\mu$ denote the unique invariant probability distribution of the process $\bigl(m(t)\bigr)_{t\ge 0}$, and let $\mathcal{L}$ denote its infinitesimal generator.

Define, for all $x\in\T$,
\begin{equation}\label{eq:bars}
\left\lbrace
\begin{aligned}
\overline{\sigma}(x)&=\int \sigma(x,m)d\mu(m)\\
\langle \sigma\rangle(x)&=\sqrt{\int \bigl(\sigma(x,m)-\overline{\sigma}(x)\bigr)^2 d\mu(m)}.
\end{aligned}
\right.
\end{equation}

We assume that for all $x\in\R$, the Poisson equations
\begin{equation}\label{eq:Poisson}
\left\lbrace
\begin{aligned}
-\mathcal{L}\psi_1(x,\cdot)&=\sigma(x,\cdot)-\overline{\sigma}(x)\\
-\mathcal{L}\psi_2(x,\cdot)&=\bigl(\sigma(x,\cdot)-\overline{\sigma}(x)\bigr)^2-\langle \sigma\rangle(x)^2
\end{aligned}
\right.
\end{equation}
admit solutions $\psi_1$, $\psi_2$ -- without loss of generality one assumes that for all $x\in\T$ one has $\int\psi_1(x,m)d\mu(m)=\int\psi_2(x,m)d\mu(m)=0$ -- and that the solutions $\psi_1,\psi_2$ are of class $\mathcal{C}^4$ on $\T\times\R$. In addition, the derivatives are assumed to grow at most quadratically with respect to $m$.
\end{ass}

Note that the mappings $\overline{\sigma}$ and $\langle\sigma\rangle^2$ inherit the regularity properties from the mapping $\sigma$ with respect to the $x$-variable: in particular they are of class $\mathcal{C}^4$ on the torus $\T$. Recall that for all $x\in\T$ the mapping $\sigma(x,\cdot)$ is not constant (owing to Assumption~\ref{ass:sigma}), thus one has $\langle \sigma\rangle^2(x)>0$ for all $x\in\T$. As a consequence, $\langle\sigma\rangle$ then inherits the regularity properties from $\langle\sigma\rangle^2$, in particular it is of class $\mathcal{C}^4$.

Note that the solvability of the Poisson equations~\eqref{eq:Poisson} is possible since the right-hand sides satisfy the required centering conditions by definitions~\eqref{eq:bars} of $\overline{\sigma}$ and $\langle\sigma\rangle^2$. Observe also that for all $x\in\T$ one has
\begin{equation}\label{eq:bar2}
\overline{\sigma}(x)^2+\langle\sigma\rangle(x)^2=\overline{\sigma^2}(x)=\int \sigma(x,m)^2 d\mu(m).
\end{equation}

Let us provide a standard example for the fast process: $\bigl(m(t)\bigr)_{t\ge 0}$ can be the solution of the SDE
\[
dm(t)=-V'(m(t))dt+\sqrt{2}dW(t),
\]
with appropriate assumptions on the potential $V:\R\to\R$ -- for instance $V(x)=x^2/2$, which gives an Ornstein-Uhlenbeck process. In that example the fast process $\bigl(m^\epsilon(t)\bigr)_{t\ge 0}$ solves the SDE
\[
dm^\epsilon(t)=-\frac{V'(m^\epsilon(t))}{\epsilon}dt+\frac{\sqrt{2}}{\sqrt{\epsilon}}dW(t),
\]
and the invariant distribution $\mu$ is given by
\[
d\mu(m)=Z^{-1}\exp(-V(m))dm
\]
with the normalization constant $Z=\int \exp(-V(m))dm$. In that example, it is straightforward to check that the conditions in Assumption~\ref{ass:m} are satisfied (with appropriate regularity and growth assumptions on $V'$).

\subsection{Main result}\label{sec:main}

The objective of this article is to propose a version of the averaging principle with an original point of view. First, recall that the standard version states that when $\epsilon\to 0$, the solution $X^\epsilon$ of~\eqref{eq:SDE} converges in distribution to the solution $\overline{X}$ of the averaged equation
\begin{equation}\label{eq:averaged}
d\overline{X}(t)=\Sigma(\overline{X}(t))d\beta(t)
\end{equation}
with initial condition $\overline{X}(0)=x_0$, where
\[
\Sigma(x)=\sqrt{\overline{\sigma^2(x)}}.
\]
Note that $\Sigma(x)>\overline{\sigma}(x)$, owing to the identity~\eqref{eq:bar2} and Assumption~\ref{ass:sigma}. We refer for instance to~\cite[Chapter~17]{PavliotisStuart} (and the other references mentioned in Section~\ref{sec:intro}).

The version of the averaging principle studied in this article requires to introduce two auxiliary processes $Y^\epsilon$ and $Z^\epsilon$ as follows: we consider the system
\begin{equation}\label{eq:system}
\left\lbrace
\begin{aligned}
dY^\epsilon(t)&=\overline{\sigma}(Y^\epsilon(t)+Z^\epsilon(t))d\beta(t)\\
dZ^\epsilon(t)&=\bigl(\sigma(Y^\epsilon(t)+Z^\epsilon(t),m^\epsilon(t))-\overline{\sigma}(Y^\epsilon(t)+Z^\epsilon(t))\bigr)d\beta(t)
\end{aligned}
\right.
\end{equation}
with initial conditions $Y^\epsilon(0)=0$ and $Z^\epsilon(0)=x_0$. Observe that by construction, one has the identity
\[
X^\epsilon(t)=Y^\epsilon(t)+Z^\epsilon(t)
\]
for all $t\ge 0$.

The main result of this article is the convergence in distribution of $(Y^\epsilon(T),Z^\epsilon(T))$ to $({Y}(T),{Z}(T))$, where the process $\bigl({Y}(t),{Z}(t)\bigr)_{t\ge 0}$ is defined as follows:
\begin{equation}\label{eq:system_av}
\left\lbrace
\begin{aligned}
dY(t)&=\overline{\sigma}(Y(t)+Z(t))d\beta_t^1\\
dZ(t)&=\langle \sigma\rangle (Y(t)+Z(t))d\beta_t^2,
\end{aligned}
\right.
\end{equation}
where $\bigl(\beta_t^1\bigr)_{t\ge 0}$ and $\bigl(\beta_t^2\bigr)_{t\ge 0}$ are two independent standard real-valued Wiener processes.

We are now in position to state the refined version of the averaging principle.
\begin{theo}\label{th:main}
For all $T\in(0,\infty)$, one has the convergence in distribution
\[
(Y^\epsilon(T),Z^\epsilon(T))\underset{\epsilon\to 0}\to (Y(T),Z(T)).
\]
\end{theo}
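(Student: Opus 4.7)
The plan is to prove convergence in distribution by establishing that $\E[\phi(Y^\epsilon(T),Z^\epsilon(T))] \to \E[\phi(Y(T),Z(T))]$ for every test function $\phi$ of class $\mathcal{C}^4_b(\R^2)$. The key auxiliary object is the function $u:[0,T]\times\R^2\to\R$ defined as the classical solution of the backward Kolmogorov equation associated with the limit system~\eqref{eq:system_av}:
\[
\partial_t u + \tfrac12 \overline{\sigma}(y+z)^2 \partial_y^2 u + \tfrac12 \langle\sigma\rangle(y+z)^2 \partial_z^2 u = 0,\qquad u(T,\cdot,\cdot) = \phi,
\]
whose generator contains no $\partial_y\partial_z$ term precisely because $\beta^1$ and $\beta^2$ are independent, and satisfies $u(0,0,x_0) = \E[\phi(Y(T),Z(T))]$. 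A preliminary ingredient is uniform control of the spatial derivatives of $u$ up to order $4$, which follows from the $\mathcal{C}^4$ regularity of $\overline{\sigma}$ and $\langle\sigma\rangle$ on $\T$ (Assumption~\ref{ass:sigma}) by standard parabolic estimates.

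Applying Itô's formula to $u(t,Y^\epsilon(t),Z^\epsilon(t))$ between $t=0$ and $t=T$ and taking expectation yields
\[
\E[\phi(Y^\epsilon(T),Z^\epsilon(T))] - \E[\phi(Y(T),Z(T))] = \E\int_0^T R_\epsilon(t,Y^\epsilon(t),Z^\epsilon(t),m^\epsilon(t))\,dt,
\]
where, because $Y^\epsilon$ and $Z^\epsilon$ are driven by the \emph{same} Wiener process $\beta$, the $\partial_y^2 u$ coefficients cancel between the two generators and only two terms remain:
\[
R_\epsilon(t,y,z,m) = \overline{\sigma}(y+z)\bigl(\sigma(y+z,m)-\overline{\sigma}(y+z)\bigr)\partial_y\partial_z u + \tfrac12\bigl[(\sigma(y+z,m)-\overline{\sigma}(y+z))^2 - \langle\sigma\rangle(y+z)^2\bigr]\partial_z^2 u.
\]
The decisive observation is that both factors multiplying $\partial_y\partial_z u$ and $\partial_z^2 u$ are \emph{centered} with respect to $\mu$ in the $m$-variable, by the very definitions~\eqref{eq:bars} of $\overline{\sigma}$ and $\langle\sigma\rangle^2$.

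This is where the two Poisson equations~\eqref{eq:Poisson} enter. Setting
\[
\Psi(t,y,z,m) = \overline{\sigma}(y+z)\psi_1(y+z,m)\partial_y\partial_z u(t,y,z) + \tfrac12 \psi_2(y+z,m)\partial_z^2 u(t,y,z),
\]
one has $-\mathcal{L}\Psi(t,y,z,\cdot) = R_\epsilon(t,y,z,\cdot)$. I would then apply Itô's formula to $\epsilon\,\Psi(t,Y^\epsilon(t),Z^\epsilon(t),m^\epsilon(t))$; since $m^\epsilon$ has generator $\mathcal{L}/\epsilon$ and is independent of $\beta$ (so no cross quadratic variations arise), the term $\mathcal{L}_m\Psi = -R_\epsilon$ appears without an $\epsilon$ prefactor, while the $\partial_t$ contribution and the second-order contributions in $(y,z)$ all carry a factor $\epsilon$. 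Taking expectation yields
\[
\E\int_0^T R_\epsilon\,dt = \epsilon\bigl[\Psi(0,0,x_0,m_0) - \E\Psi(T,Y^\epsilon(T),Z^\epsilon(T),m^\epsilon(T))\bigr] + \epsilon\,\E\int_0^T \mathcal{R}_\epsilon\,dt,
\]
where $\mathcal{R}_\epsilon$ collects bounded quantities in $(y,z)$ multiplied by $\psi_i$ and its derivatives, or by $\sigma-\overline{\sigma}$.

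The main technical obstacle is to make the right-hand side genuinely $O(\epsilon)$, which reduces to uniform-in-$\epsilon$ moment control of $m^\epsilon$. The quadratic growth in $m$ of the derivatives of $\psi_1,\psi_2$ (Assumption~\ref{ass:m}), combined with $\sup_{t\ge 0}\E|m(t)|^2<\infty$ (which transfers verbatim to $m^\epsilon$ by time rescaling), the boundedness of $\overline{\sigma}$ and $\langle\sigma\rangle$ on $\T$, the bounded derivatives of $u$, and the at-most-linear growth of $\sigma-\overline{\sigma}$ in $m$ together yield an $O(1)$ bound on $\mathcal{R}_\epsilon$ and on the boundary terms---possibly after invoking slightly higher moments of $m^\epsilon$ in intermediate steps via Cauchy--Schwarz. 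Hence $\E\int_0^T R_\epsilon\,dt = O(\epsilon)$. Uniform $L^2$ bounds on $(Y^\epsilon(T),Z^\epsilon(T))$, obtained from Itô's isometry together with the same moment control on $m^\epsilon$, give tightness in $\R^2$, so that convergence of expectations against $\mathcal{C}^4_b$ test functions upgrades to weak convergence and concludes the proof of Theorem~\ref{th:main}.
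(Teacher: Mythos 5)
Your proposal is correct and follows essentially the same route as the paper: the backward Kolmogorov equation for the limit system, cancellation of all but the two centered terms ($\partial_y\partial_z u$ and $\partial_z^2 u$), the two Poisson equations for $\psi_1,\psi_2$ combined into a single corrector $\Psi$ (the paper's $\Phi$), and a second application of It\^o's formula yielding an $O(\epsilon)$ weak error via the moment bounds on $m^\epsilon$. The only differences are cosmetic (forward vs.\ backward time convention for $u$, and the tightness remark, which is superfluous since the state space is the torus).
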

Note that the standard version of the averaging principle is a straightforward corollary of Theorem~\ref{th:main}. On the one hand, one has the almost sure equality $X^\epsilon(T)=Y^\epsilon(T)+Z^\epsilon(T)$. On the other hand, set $X(t)=Y(t)+Z(t)$, then one has
\[
dX(t)=\overline{\sigma}(X(t))d\beta_t^1+\langle\sigma\rangle(X(t))d\beta_t^2.
\]
The associated infinitesimal generator is given by
\[
\frac{1}{2}\bigl(\overline{\sigma}(x)^2+\langle\sigma\rangle(x)^2\bigr)\partial_{xx}^2=\frac12 \overline{\sigma^2}(x)\partial_{xx}^2=\frac12\Sigma(x)^2\partial_{xx}^2,
\]
owing to the identity~\eqref{eq:bar2}. As a consequence $X$ and $\overline{X}$ are Markov processes with the same infinitesimal generator, and $X(0)=\overline{X}(0)=x_0$: we thus obtain the equality $X(T)=\overline{X}(T)$ in distribution. Finally, Theorem~\ref{th:main} implies
\[
X^\epsilon(T)=Y^\epsilon(T)+Z^\epsilon(T)\underset{\epsilon\to 0}\to Y(T)+Z(T)=\overline{X}(T)
\]
where the convergence and the equality are understood to hold in distribution.

The refined version is an explanation of the well-known fact that $\Sigma(x)>\overline{\sigma}(x)$ -- which is often justified by the observation that one needs to average the infinitesimal generator of the process instead of its coefficients. It also illustrates why the convergence only holds in distribution. To the best of our knowledge, Theorem~\ref{th:main} is a new result.

Let us present a simplified case to illustrate Theorem~\ref{th:main}: assume that $\sigma(x,m)=\sigma(m)$ only depends on $m$. In that case, $\overline{\sigma}$ and $\langle\sigma\rangle$ are constants, the system~\eqref{eq:system} is rewritten as
\[
\left\lbrace
\begin{aligned}
dY^\epsilon(t)&=\overline{\sigma}d\beta(t)\\
dZ^\epsilon(t)&=\bigl(\sigma(m^\epsilon(t))-\overline{\sigma}\bigr)d\beta(t).
\end{aligned}
\right.
\]
In particular, the distribution of $Y^\epsilon(T)$ is $\mathcal{N}(0,\overline{\sigma}^2)$ does not depend on $\epsilon$. Owing to Theorem~\ref{th:main}, $Z^\epsilon(T)$ converges in distribution to $Z(T)\sim \mathcal{N}(0,\langle \sigma\rangle^2 T)$. In fact, more precisely $(Y^\epsilon(T),Z^\epsilon(T))$ converges in distribution to the non-degenerate Gaussian distribution $\mathcal{N}(0,Q)$ with diagonal covariance matrix $Q$, such that $Q_{11}=\overline{\sigma}^2T$, $Q_{22}=\langle\sigma\rangle^2T$. Finally, $Y^\epsilon(T)+Z^\epsilon(T)$ converges in distribution to $\mathcal{N}(0,\overline{\sigma^2}T)$, since $\overline{\sigma}^2+\langle \sigma\rangle^2=\overline{\sigma^2}$. This confirms how Theorem~\ref{th:main} is a refinement of the standard averaging principle in the simplified case

\section{Proof of Theorem~\ref{th:main}}\label{sec:proof}

The objective of this section is to give the proof of Theorem~\ref{th:main}. Before proceeding, let us first introduce some of the main arguments of the proof.

Assume that $\varphi:\mathbb{T}^2\to\R$ is a mapping of class $\mathcal{C}^4$. We prove below that the weak error satisfies
\begin{equation}\label{eq:weakerror}
\big|\E[\varphi(Y^\epsilon(T),Z^\epsilon(T))]-\E[\varphi(Y(T),Z(T))]\big|\le C(T,\varphi,x_0)\epsilon
\end{equation}
for some $C(T,\varphi,x_0)\in(0,\infty)$. By a standard approximation argument, the weak error estimate~\eqref{eq:weakerror} implies that one has
\[
\E[\varphi(Y^\epsilon(T),Z^\epsilon(T))]\underset{\epsilon\to 0}\to\E[\varphi(Y(T),Z(T))]
\]
for all continuous mappings $\varphi:\T^2\to\R$, which means the convergence in distribution stated in Theorem~\ref{th:main}. It thus suffices to establish the weak error estimate~\eqref{eq:weakerror} to prove Theorem~\ref{th:main}.

To prove the weak error estimate~\eqref{eq:weakerror}, it is convenient to introduce two auxiliary mappings $u$ and $\Phi$ from $[0,T]\times \T^2\times\R$ to $\R$. First, $u$ is the solution of the Kolmogorov equation associated with the SDE system~\eqref{eq:system_av} for $\bigl(Y(t),Z(t)\bigr)_{t\ge 0}$:
\begin{equation}\label{eq:Kolmo}
\partial_tu(t,y,z)=\frac12\overline{\sigma}(y+z)^2\partial_{yy}^2u(t,y,z)+\frac12\langle\sigma\rangle(y+z)^2\partial_{zz}^2u(t,y,z),
\end{equation}
with initial condition $u(0,y,z)=\varphi(y,z)$ for all $(y,z)\in\T^2$. Using Assumption~\ref{ass:sigma} and~\eqref{eq:bars}, one checks that $u$ is of class $\mathcal{C}^4$ with respect to $(y,z)$ and of class $\mathcal{C}^1$ with respect to $t$.

Second, for all $t\ge 0$, $(y,z)\in\T^2$ and $m\in\R$, set
\begin{equation}\label{eq:Phi}
\Phi(t,y,z,m)=\overline{\sigma}(y+z)\partial_{yz}^2u(T-t,y,z)\psi_1(y,z,m)+\frac12\partial_{zz}^2u(T-t,y,z)\psi_2(y,z,m).
\end{equation}
One checks that $\Phi$ is of class $\mathcal{C}^1$ with respect to $t$, and of class $\mathcal{C}^2$ with respect to $(y,z,m)$, with at most quadratic growth with respect to $m$.

\begin{proof}[Proof of Theorem~\ref{th:main}]
Expressing the weak error in terms of the solution $u$ of the Kolmogorov equation~\eqref{eq:Kolmo}, and applying It\^o's formula, one obtains
\begin{align*}
\E[\varphi&(Y_T^\epsilon,Z_T^\epsilon)]-\E[\varphi(Y_T,Z_T)]\\
&=\E[u(0,Y_T^\epsilon,Z_T^\epsilon)]-\E[u(T,Y_0^\epsilon,Z_0^\epsilon)]\\
&=\int_0^T\E\bigl[-\partial_tu(T-t,Y_t^\epsilon,Z_t^\epsilon)\bigr]dt\\
&+\frac{1}{2}\int_0^T\E\bigl[\overline{\sigma}(Y_t^\epsilon+Z_t^\epsilon)^2\partial_{yy}u(T-t,Y_t^\epsilon,Z_t^\epsilon)\bigr]dt\\
&+\int_0^T\E\bigl[\overline{\sigma}(Y_t^\epsilon+Z_t^\epsilon)\bigl(\sigma(Y_t^\epsilon+Z_t^\epsilon,m_t^\epsilon)-\overline{\sigma}(Y_t^\epsilon+Z_t^\epsilon)\bigr)\partial_{yz}^2u(T-t,Y_t^\epsilon,Z_t^\epsilon) \bigr]dt\\
&+\frac12\int_0^T\E\bigl[\bigl(\sigma(Y_t^\epsilon+Z_t^\epsilon,m_t^\epsilon)-\overline{\sigma}(Y_t^\epsilon+Z_t^\epsilon)\bigr)^2\partial_{zz}^2u(T-t,Y_t^\epsilon,Z_t^\epsilon)\bigr]dt\\
&=\int_0^T\E\bigl[\overline{\sigma}(Y_t^\epsilon+Z_t^\epsilon)\Bigl(\sigma(Y_t^\epsilon+Z_t^\epsilon,m_t^\epsilon)-\overline{\sigma}(Y_t^\epsilon+Z_t^\epsilon)\Bigr)\partial_{yz}^2u(T-t,Y_t^\epsilon,Z_t^\epsilon) \bigr]dt\\
&+\frac12\int_0^T\E\bigl[\Bigl(\bigl(\sigma(Y_t^\epsilon+Z_t^\epsilon,m_t^\epsilon)-\overline{\sigma}(Y_t^\epsilon+Z_t^\epsilon)\bigr)^2-\langle \sigma\rangle(Y_t^\epsilon+Z_t^\epsilon)^2\Bigr)\partial_{zz}^2u(T-t,Y_t^\epsilon,Z_t^\epsilon)\bigr]dt,
\end{align*}
where the last line comes from replacing $\partial_tu$ using the Kolmogorov equation~\eqref{eq:Kolmo}.

Observe that the two terms in the right-hand side above have a nice form, since the factors in parenthesis are centered with respect to the invariant distribution $\mu$ in the $m$ variable, and the other factors do not depend on $m$. Recall that the auxiliary functions $\psi_1$ and $\psi_2$ are defined as solutions of the Poisson equations~\eqref{eq:Poisson}. As a consequence, by the definition~\eqref{eq:Phi} of the auxiliary function $\Phi$, the weak error satisfies the identity
\begin{equation}\label{eq:expressweakerror}
\E[\varphi(Y_T^\epsilon,Z_T^\epsilon)]-\E[\varphi(Y_T,Z_T)]=-\int_{0}^{T}\E[\mathcal{L}\Phi(t,Y_t^\epsilon,Z_t^\epsilon,m_t^\epsilon)]dt.
\end{equation}
Applying It\^o's formula, one has
\begin{align*}
\E\bigl[\Phi(T,Y_T^\epsilon,Z_T^\epsilon,m_T^\epsilon)\bigr]&=\E[\Phi(0,Y_0^\epsilon,Z_0^\epsilon,m_0^\epsilon)]\\
&\quad+\int_{0}^{T}\E\bigl[\mathcal{A}\Phi(t,Y_t^\epsilon,Z_t^\epsilon,m_t^\epsilon)\bigr]dt+\frac{1}{\epsilon}\int_{0}^{T}\E[\mathcal{L}\Phi(t,Y_t^\epsilon,Z_t^\epsilon,m_t^\epsilon)]dt,
\end{align*}
where the auxiliary differential operator $\mathcal{A}$ is given by
\[
\mathcal{A}=\partial_t+\frac12\overline{\sigma}(y+z)^2\partial_{yy}^2+\overline{\sigma}(y+z)\bigl(\sigma(y+Z,m)-\overline{\sigma}(y+z)\bigr)\partial_{yz}^2+\frac12\bigl(\sigma(y+Z,m)-\overline{\sigma}(y+z)\bigr)^2\partial_{zz}^2.
\]
Finally, the weak error estimate satifies
\begin{align*}
\E[\varphi(Y_T^\epsilon,Z_T^\epsilon)]-\E[\varphi(Y_T,Z_T)]&=\epsilon\bigl(\E[\Phi(0,Y_0^\epsilon,Z_0^\epsilon,m_0^\epsilon)]-\E[\Phi(T,Y_T^\epsilon,Z_T^\epsilon,m_T^\epsilon)]\bigr)\\
&+\epsilon\int_{0}^{T}\E\bigl[\mathcal{A}\Phi(t,Y_t^\epsilon,Z_t^\epsilon,m_t^\epsilon)\bigr]dt\\
&={\rm O}(\epsilon)
\end{align*}
using the regularity properties of $\Phi$ and the moment estimate
\[
\underset{\epsilon\in(0,1)}\sup~\underset{t\ge 0}\sup~\E[|m^\epsilon(t)|^2]=\underset{t\ge 0}\sup~\E[|m(t)|^2]<\infty
\]
owing to Assumption~\ref{ass:m}.

This concludes the proof of the weak error estimate~\eqref{eq:weakerror} and of Theorem~\ref{th:main}.
\end{proof}

Observe that the proof of Theorem~\ref{th:main} requires to exploit the solutions $\psi_1$ and $\psi_2$ of two auxiliary Poisson equation. On the one hand, the proof of the standard averaging principle exploits the solution $\psi$ of a single Poisson equation, namely
\[
-\mathcal{L}\psi(x,\cdot)=\sigma^2(x,\cdot)-\overline{\sigma^2}(x).
\]
On the other hand, using the solutions of two Poisson equations is standard in homogenization theory, where the infinitesimal generator has an expansion of the form $\mathcal{L}^\epsilon=\mathcal{L}_0+\epsilon^{-1}\mathcal{L}_1+\epsilon^{-2}\mathcal{L}$ -- whereas it is of the form $\mathcal{L}^\epsilon=\mathcal{L}_0+\epsilon^{-1}\mathcal{L}$ in the averaging regime we consider. The two Poisson equation appears to deal with different scales $\epsilon^{0}$ and $\epsilon^{-1}$ in that problem.



\section{Discussion}\label{sec:disc}

In this article, we have revisited the averaging principle for the class of stochastic differential equations given by~\eqref{eq:SDEintro}. Contrary to the standard approach, we propose to decompose $X^\epsilon=Y^\epsilon+Z^\epsilon$ (see~\eqref{eq:system}), where $Y^\epsilon$ is defined in terms of the average $\overline{\sigma}$ (with respect to the fast variable) of the diffusion coefficient, and $Z^\epsilon$ represents fluctuations around the average. Our main result, Theorem~\ref{th:main}, states that $(Y^\epsilon,Z^\epsilon)$ converges in distribution to a non-trivial limit $(Y,Z)$. The key observation is that $Z$ is not equal to $0$, this explains why the limit $\overline{X}$ for $X^\epsilon$ is defined in terms of the average $\overline{\sigma^2}$ of the square of the diffusion coefficient. Note that $\overline{\sigma^2}\ge \overline{\sigma}^2$ by the Cauchy-Schwarz inequality (see~\eqref{eq:bar2}), {\it i.e.} diffusion is enhanced in the averaging procedure, and the behavior of the fluctuation term $Z^\epsilon$ quantifies the increase in the diffusion.

The approach to prove Theorem~\ref{th:main} is based on a classical strategy when studying multiscale stochastic systems: weak error estimates are proved using solutions of the Kolmogorov equation associated with the limit, and of Poisson equations associated with the behavior of the fast component. The solvability of the Poisson equations requires centering conditions to be satisfied, which identify limit. The proof of Theorem~\ref{th:main} is original since we employ the solutions of two Poisson equations, instead of only one in the standard proof of the averaging principle.

Our study is limited to one-dimensional SDEs. It is expected that generalizing the result to higher-dimensional SDEs and SPDEs is possible. This may be studied in future works. Note also that it would be straightforward to include drift terms in the SDE~\eqref{eq:SDEintro}: since for those terms one would only need to average the drift term, one would only need to modify the definition of the average term $Y^\epsilon$, whereas the definition of the fluctuation term $Z^\epsilon$ would not be modified.

To conclude this article, let us mention that recently the averaging principle was proved for stochastic differential equations driven by a fractional Brownian motion with Hurst index $H>1/2$, see~\cite{HairerLi}:
\[
dX_t^\epsilon(t)=\sigma\bigl(X^\epsilon(t),m(t/\epsilon)\bigr)d\beta^H(t).
\]
The expression of the averaged equation is different from~\eqref{eq:averaged}: it is of the type
\[
d\overline{X}^H(t)=\overline{\sigma}(\overline{X}^{H}(t))d\beta^H(t)
\]
{\it i.e.} one simply needs to average the diffusion coefficient. In that case, the decomposition $X^\epsilon=Y^\epsilon+Z^\epsilon$ would give $Z^\epsilon\to 0$ when $\epsilon\to 0$, {\it i.e.} the fluctuation term does not contribute to the limit if $H>1/2$ -- in the same way as it does not contribute for drift terms. Our result thus illustrates the differences in the averaging principle between the standard and fractional Brownian motion cases. Note that, to the best of our knowledge, the validity and expression of the averaging principle if the Hurst index satisfies $H<1/2$ is not known. The approach introduced in this article may be suitable to investigate this challenging question in future works.

\section*{Acknowledgments}

The author would like to thank Greg Pavliotis and Andrew Stuart for the suggestions of some references.


\end{document}